\newtheorem{theorem}{Theorem}[section]
\newtheorem{corollary}[theorem]{Corollary}
\newtheorem{proposition}[theorem]{Proposition}
\theoremstyle{definition}
\newtheorem{remark}[theorem]{Remark}
\newcommand{\trop}{\operatorname{trop}}
\newcommand{\an}{\operatorname{an}}
\newcommand{\girth}{\mathrm{girth}}
\title{The Locus of Brill-Noether General Graphs is not Dense}
\address[dave.jensen@uky.edu]{David Jensen, University of Kentucky, Lexington, KY}
\author{David Jensen}
\date{}
\begin{document}

\maketitle

\begin{abstract}
We provide an example of a trivalent, 3-vertex connected graph $G$ such that, for any choice of metric on $G$, the resulting metric graph is Brill-Noether special.
\end{abstract}

\begin{classification}
Primary 14T05; Secondary 14H51.
\end{classification}

\begin{keywords}
Chip-firing, Brill-Noether theory, tropical curves.
\end{keywords}

\section{Introduction}

We say that an algebraic curve $C$ is Brill-Noether general if, for all positive integers $r$ and $d$, the variety $W^r_d (C)$ parameterizing divisors of degree $d$ and rank at least $r$ has dimension equal to the Brill-Noether number $\rho (g,r,d) = g-(r+1)(g-d+r)$, and is empty when $\rho$ is negative.  Otherwise, we say that $C$ is Brill-Noether special.  By the Brill-Noether Theorem \cite{GriffithsHarris80}, the locus of Brill-Noether general curves is a dense open subset of $M_g$.  The Baker-Norine theory of divisors on metric graphs gives us an analogous notion of Brill-Noether general graphs \cite{Baker08,BakerNorine07}.  As in the classical case, the locus of Brill-Noether general graphs in the moduli space of tropical curves $M_g^{\trop}$ is open \cite{LPP12,Len12} and non-empty \cite{tropicalBN}, but this does not imply that it is dense. Specifically, $M_g^{\trop}$ is stratified by the sets $M_G^{\trop}$ consisting of all metric graphs with the same underlying discrete graph $G$ \cite{BrannettiMeloViviani11}, and the question of which strata contain Brill-Noether general curves remains an open problem.

The top-dimensional strata of $M_g^{\trop}$ correspond to trivalent graphs, and it is a straightforward exercise to construct a trivalent graph $G$ with the property that every metric graph $\Gamma \in M_G^{\trop}$ is Brill-Noether special.  For example, if $G$ is the graph pictured in Figure \ref{Fig:Cheating}, obtained by attaching a loop to each leaf of a tree, then every $\Gamma \in M_G^{\trop}$ is hyperelliptic.  Prior to this note, however, all known examples of such graphs contained bridges.  This is a bit unsatisfying, as the length of the bridges does not affect either the Jacobian or the Brill-Noether theory of the graph, and for this reason it is customary to treat graphs without bridges as the proper analogues of algebraic curves (see, for example, \cite[Remark 4.8]{BakerNorine07}).  More precisely, one might ask if the locus of Brill-Noether general graphs has dense image in the moduli space of tropical Jacobians.  In \cite[Theorem 4.1.9]{CaporasoViviani10}, it is shown that the Torelli theorem holds for 3-vertex connected metric graphs -- that is, the Jacobians of two 3-vertex connected metric graphs are isomorphic as principally polarized tropical abelian varieties if and only if the two graphs are isomorphic as tropical curves\footnote{We note that, for trivalent graphs, the property of 3-vertex connectivity is equivalent to that of 3-egde connectivity (see, for example, \cite[Lemma A.1.2]{CaporasoViviani10}).}.  It is therefore more natural to ask for a 3-vertex connected trivalent graph $G$ such that every $\Gamma \in M_G^{\trop}$ is Brill-Noether special.  The question of whether such graphs exist has appeared in several places, for example in \cite[p. 6]{LPP12}.  In this note we provide an example of such a graph.

%Figure 1
\begin{figure}
\centering

\begin{tikzpicture}

\draw (-1.5,0) circle (0.5);
\draw (1.2,.7) circle (0.5);
\draw (1.2,-.7) circle (0.5);
\draw (-1,0)--(0,0);
\draw (0,0)--(.7,.7);
\draw (0,0)--(.7,-.7);
\draw [ball color=black] (-1,0) circle (0.55mm);
\draw [ball color=black] (0,0) circle (0.55mm);
\draw [ball color=black] (.7,.7) circle (0.55mm);
\draw [ball color=black] (.7,-.7) circle (0.55mm);

\end{tikzpicture}
\caption{A graph that is hyperelliptic for any choice of edge lengths}
\label{Fig:Cheating}
\end{figure}
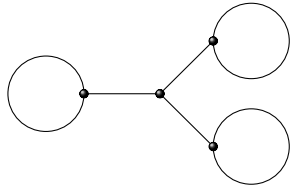

Our example is the Heawood graph, which is the Levi graph of the Fano plane.  This graph, depicted in Figure \ref{Fig:Heawood}, has 14 vertices, corresponding to the 7 points and 7 lines in the Fano plane, with an edge between two vertices if the corresponding point lies on the corresponding line.  Among the many fascinating combinatorial properties of the Heawood graph is the fact that it is the smallest trivalent graph with girth 6, and that it is the unique symmetric graph of genus 8.  Our main result is the following.

%Figure 2
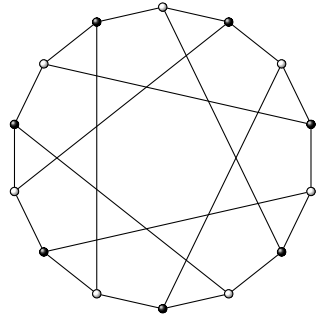
\begin{figure}
\centering

\begin{tikzpicture}

\draw (1.9498,.445)--(1.5636,1.247);
\draw (1.5636,1.247)--(.8678,1.802);
\draw (.8678,1.802)--(0,2);
\draw (0,2)--(-.8678,1.802);
\draw (-.8678,1.802)--(-1.5636,1.247);
\draw (-1.5636,1.247)--(-1.9498,.445);
\draw (-1.9498,.445)--(-1.9498,-.445);
\draw (-1.9498,-.445)--(-1.5636,-1.247);
\draw (-1.5636,-1.247)--(-.8678,-1.802);
\draw (-.8678,-1.802)--(0,-2);
\draw (0,-2)--(.8678,-1.802);
\draw (.8678,-1.802)--(1.5636,-1.247);
\draw (1.5636,-1.247)--(1.9498,-.445);
\draw (1.9498,-.445)--(1.9498,.445);

\draw (1.9498,.445)--(-1.5636,1.247);
\draw (1.5636,1.247)--(0,-2);
\draw (.8678,1.802)--(-1.9498,-.445);
\draw (0,2)--(1.5636,-1.247);
\draw (-.8678,1.802)--(-.8678,-1.802);
\draw (1.9498,-.445)--(-1.5636,-1.247);
\draw (-1.9498,.445)--(.8678,-1.802);
\draw [ball color=black] (1.9498,.445) circle (0.55mm);
\draw [ball color=white] (1.5636,1.247) circle (0.55mm);
\draw [ball color=black] (.8678,1.802) circle (0.55mm);
\draw [ball color=white] (0,2) circle (0.55mm);
\draw [ball color=black] (-.8678,1.802) circle (0.55mm);
\draw [ball color=white] (-1.5636,1.247) circle (0.55mm);
\draw [ball color=black] (-1.9498,.445) circle (0.55mm);
\draw [ball color=white] (-1.9498,-.445) circle (0.55mm);
\draw [ball color=black] (-1.5636,-1.247) circle (0.55mm);
\draw [ball color=white] (-.8678,-1.802) circle (0.55mm);
\draw [ball color=black] (0,-2) circle (0.55mm);
\draw [ball color=white] (.8678,-1.802) circle (0.55mm);
\draw [ball color=black] (1.5636,-1.247) circle (0.55mm);
\draw [ball color=white] (1.9498,-.445) circle (0.55mm);

\end{tikzpicture}
\caption{The Heawood graph}
\label{Fig:Heawood}
\end{figure}

\begin{theorem}
\label{Thm:Heawood}
If $G$ is the Heawood graph, then any metric graph $\Gamma \in M_G^{\trop}$ possesses a divisor of degree 7 and rank 2.  Since $g(G) = 8$ and $\rho (8,2,7) = -1$, every such metric graph $\Gamma$ is Brill-Noether special.
\end{theorem}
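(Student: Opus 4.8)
The plan is to take $D$ to be the divisor placing one chip on each of the seven vertices corresponding to points of the Fano plane, and to prove $r(D) \ge 2$ for every metric; since $\rho(8,2,7) = -1$, the existence of such a divisor forces $W^2_7(\Gamma) \neq \emptyset$ and hence Brill-Noether specialness. To bound the rank I will use the Baker-Norine criterion: $r(D) \ge 2$ exactly when $D - E$ is linearly equivalent to an effective divisor for every effective $E$ of degree $2$. The difficulty is that $E$ ranges over a continuum — its two chips may lie anywhere on $\Gamma$, including edge interiors — so a priori the answer could depend on the edge lengths. The first and main step is to eliminate this metric dependence by invoking Luo's theory of rank-determining sets: because the Heawood graph is simple, its natural model is loopless, so the set of its $14$ vertices is rank-determining. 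It then suffices to verify the condition for the finitely many effective degree-$2$ divisors $E$ supported on vertices.

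With the problem reduced to vertices, I would produce, by explicit integer chip-firing (valid for every metric, since such moves are insensitive to edge lengths), three effective divisors linearly equivalent to $D$, each adapted to the incidence geometry of the Fano plane. Firing the complement of the closed star of a point-vertex $p$ gives $E_p = p + 2\sum_{L \ni p} L$; firing the complement of a single line-vertex $L$ gives $H_L = 3L + \sum_{q \notin L} q$; and the $p$-reduced divisor of $D$ turns out to be $7p$. Running through the six types of degree-$2$ vertex divisors $E$, each is dominated by one of these: two point-vertices by $D$ itself; a point $p$ counted twice by $7p$; two line-vertices $L, L'$ by $E_p$ with $p = L \cap L'$ (using that any two lines of the Fano plane meet); a point with an incident line by $E_p$; and a doubled line $L$ by $H_L$.

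The subtle case, and the one I expect to require the most thought, is $E = p + L$ with $p \notin L$: here neither $E_p$ (whose line chips sit only on lines through $p$) nor $D$ places any chip on $L$, so the naive choices fail. The resolution is precisely $H_L = 3L + \sum_{q \notin L} q$, for since $p \notin L$ the point $p$ occurs among the four points off $L$, whence $H_L \ge L + p$ and $D - E \sim H_L - E \ge 0$. Once all six cases are settled, the rank-determining reduction yields $r(D) \ge 2$ for every $\Gamma \in M_G^{\trop}$. The conceptual crux is therefore the passage from the continuum of test divisors to the finite vertex check; the combinatorial heart is that the three dual relations of the Fano plane — collinearity of points, concurrence of lines, and non-incidence — are each absorbed by one of $D$, $E_p$, and $H_L$.
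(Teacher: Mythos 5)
Your first step --- invoking Luo's theorem that the vertices of a loopless model form a rank-determining set, so that it suffices to test the finitely many effective degree-$2$ divisors $E$ supported on $V(G)$ --- is sound, and your case analysis of the six types of $E$ would indeed complete the proof \emph{on the Heawood graph with all edge lengths equal}. But the load-bearing claim that the equivalences $D \sim E_p$, $D \sim H_L$, $D \sim 7p$ are produced by ``explicit integer chip-firing (valid for every metric, since such moves are insensitive to edge lengths)'' is false, and in fact exactly backwards: discrete chip-firing moves are highly sensitive to edge lengths. On a metric graph, linear equivalence is witnessed by piecewise linear functions with integer slopes, and to march a chip across an edge the function must drop by the full length of that edge; firing a vertex set whose cut consists of edges of different lengths therefore forces the function to take different values at the far endpoints, and it cannot then be constant on the rest of the graph without creating unwanted divisor contributions. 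The simplest counterexample is the circle with two vertices $u,v$ joined by edges of lengths $\ell_1 \neq \ell_2$: discretely $2u \sim 2v$, but on the metric graph $[2v-2u] = 2\ell_1 \bmod (\ell_1+\ell_2) \neq 0$ in $\Jac(\Gamma)$. For the same reason your three equivalences fail for generic edge lengths on the Heawood graph; for instance $D \sim 7p$ for every $p \in B$ would force each $[p-p']$ to be $7$-torsion in $\Jac(\Gamma)$, a nontrivial closed condition on the $21$ edge lengths. Since the entire content of the theorem is uniformity over \emph{all} metrics, this is a fatal gap, not a fixable detail: the interesting phenomenon is precisely that the rank bound survives even though every one of these particular equivalences is destroyed by perturbing the lengths.

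The paper's proof shows how to avoid ever writing down an equivalence. Its Proposition (every cycle of $G$ contains at least $r+1$ vertices of $B$ implies $r_\Gamma(D_B) \geq r$) is proved by induction on $r$: since every cycle meets $B$ at least twice, the components of $\Gamma \setminus B$ have tree closures, so $B$ itself is rank-determining by Luo's Proposition 3.9; hence $r_\Gamma(D_B) \geq r$ if and only if $r_\Gamma(D_B - v) \geq r-1$ for all $v \in B$, and $D_B - v = D_{B \setminus \{v\}}$ satisfies the cycle hypothesis with $r-1$, closing the induction with the trivial base case $r=0$. Applied to the bipartite, girth-$6$ Heawood graph this gives $r_\Gamma(D_B) \geq 2$ for every metric, with no length-dependent computation anywhere. (Two further notes: the paper uses the color class $B$ of size $7$, not all $14$ vertices, as the rank-determining set; and your proposal only establishes $r(D) \geq 2$, whereas the theorem asserts rank exactly $2$, which the paper gets by checking via Dhar's burning algorithm that $D_B - v_1 - v_2$ is $v_1$-reduced --- a minor omission on your part, since rank $\geq 2$ already implies Brill--Noether specialness.)
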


The Fano plane is an example of a rank 3 matroid, and in \cite{Cartwright14} such matroids are studied in the context of the divisor lifting problem.  More specifically, given a metric graph $\Gamma$ and a divisor $D$ on $\Gamma$, one can ask whether there exists an algebraic curve and a divisor on the curve of the same rank as $D$ specializing to $\Gamma$ and $D$ respectively.  Such a pair of a curve and a divisor is called a \emph{lifting} of the pair $(\Gamma,D)$.  Among the results of \cite{Cartwright14} is the fact that, if $\Gamma$ is the Heawood graph with all edges of length one, and $D$ is the divisor of degree 7 and rank 2 described in Theorem \ref{Thm:Heawood}, then the pair $(\Gamma,D)$ admits a lifting over a valued field $K$ if and only if the characteristic of $K$ is 2.  One consequence of Theorem \ref{Thm:Heawood} is the following, which is valid in any characteristic.

\begin{corollary}
\label{Cor:Lifting}
Let $G$ be the Heawood graph and let $\Gamma \in M_G^{\trop}$ have generic edge lengths.  Then there exists a divisor $D$ on $\Gamma$ of degree 7 and rank 2 such that the pair $(\Gamma,D)$ does not admit a lifting.
\end{corollary}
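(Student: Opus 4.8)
The plan is to argue by contradiction via a dimension count, combining the classical Brill--Noether theorem with the fact that tropicalization cannot increase dimension, and then to invoke Theorem \ref{Thm:Heawood} only to guarantee that a suitable divisor is actually present.

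First I would fix $\Gamma \in M_G^{\trop}$ with generic edge lengths and let $D$ be the degree-$7$, rank-$2$ divisor produced by Theorem \ref{Thm:Heawood}. Suppose, for contradiction, that $(\Gamma, D)$ admits a lifting in the sense of \cite{Cartwright14}. By definition this produces a smooth proper curve $C$ over a valued field $K$ whose minimal skeleton is isometric to $\Gamma$ — so that $[C] \in M_8^{\an}$ tropicalizes to the point $\Gamma$ of the top-dimensional cone $M_G^{\trop}$ — together with a divisor $\mathcal{D}$ on $C$ of degree $7$ whose rank equals that of $D$, namely $2$. In particular $C$ carries a $g^2_7$, i.e. $W^2_7(C) \neq \emptyset$. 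Now I would invoke the classical Brill--Noether theorem: since $\rho(8,2,7) = -1 < 0$, the general curve of genus $8$ has $W^2_7 = \emptyset$, so the locus $\mathcal{M} = \{\, [C'] \in M_8 : W^2_7(C') \neq \emptyset \,\}$ is a proper closed subvariety of $M_8$, whence $\dim \mathcal{M} \le 20$. Our curve $C$ lies in $\mathcal{M}$, so $\Gamma = \trop([C])$ lies in the image $\trop(\mathcal{M}^{\an}) \subseteq M_8^{\trop}$.

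The key geometric input, and the step I expect to be the main obstacle, is the dimension bound $\dim \trop(\mathcal{M}^{\an}) \le \dim \mathcal{M} \le 20$. This should follow from the structure of the Abramovich--Caporaso--Payne tropicalization map $M_8^{\an} \to M_8^{\trop}$, which identifies $M_8^{\trop}$ with the skeleton of the toroidal embedding $M_8 \subset \overline{M}_8$: the associated retraction sends a $d$-dimensional subvariety to a polyhedral set of dimension at most $d$, exactly as Bieri--Groves provides for tropicalizations in the toric setting. Granting this, $\trop(\mathcal{M}^{\an})$ meets the $21$-dimensional cone $M_G^{\trop}$ (recall $G$ is trivalent with $21$ edges) in a subset of dimension at most $20$, so a generic $\Gamma \in M_G^{\trop}$ cannot lie in it — contradicting the previous paragraph.

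I would therefore conclude that for generic edge lengths no curve with skeleton $\Gamma$ possesses a $g^2_7$, so that no degree-$7$, rank-$2$ divisor on $\Gamma$ lifts; in particular the divisor $D$ furnished by Theorem \ref{Thm:Heawood} is the required non-liftable example. The remaining care lies in two places: verifying that the notion of lifting genuinely places $[C]$ in $\trop^{-1}(\Gamma) \cap \mathcal{M}^{\an}$ — that is, that the skeleton of $C$ really is $\Gamma$ and that the rank is preserved, which is where the specialization inequality $r_{\Gamma}(\mathcal{D}) \ge r_C(\mathcal{D})$ enters — and in making the dimension bound precise over a field $K$ that need not be discretely valued or algebraically closed. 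Both are handled by base-changing to a complete algebraically closed valued field and, if desired, passing to a level cover to rigidify $\overline{M}_8$, neither of which affects the dimension estimate.
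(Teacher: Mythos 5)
Your proposal is correct and follows essentially the same route as the paper: bound $\dim BN^2_7 \leq 20$ via the classical Brill--Noether theorem with $\rho(8,2,7)=-1$, push this bound through the Abramovich--Caporaso--Payne retraction $\overline{M}_8^{\an} \to \overline{M}_8^{\trop}$, and conclude that a generic point of the $21$-dimensional cone $M_G^{\trop}$ cannot be the skeleton of any curve carrying a $g^2_7$, so the divisor $D_B$ from Theorem~\ref{Thm:Heawood} cannot lift. The only differences are cosmetic: the paper argues directly with the compactified locus $\overline{BN}^2_7 \subset \overline{M}_8$ rather than by contradiction with the open locus, and it takes the dimension bound on the retracted image as immediate, where you flag it and justify it via the toroidal/Bieri--Groves picture.
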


\medskip

\noindent \textbf{Acknowledgments.}  This paper was written during a conference on specialization of divisors from curves to graphs at Banff International Research Station, in response to a question posed independently by several different participants.  We would like to thank Matt Baker, Dustin Cartwright, Ethan Cotterill, Yoav Len and Sam Payne for helpful conversations.  We would also like to thank the referees for several suggestions that helped improve the paper.  The author was supported in part by an AMS Simons Travel Grant.

\section{The Example}\label{Sec:Example}

\begin{proposition}
\label{Prop:RankBound}
Let $G$ be a graph, $B \subset V(G)$ a subset of the vertices of $G$, and $D_B = \sum_{v \in B} v$.  If every cycle of $G$ contains at least $r+1$ vertices in $B$, then for any metric graph $\Gamma \in M_G^{\trop}$, we have $r_{\Gamma} (D_B) \geq r$.
\end{proposition}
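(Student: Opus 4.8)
The plan is to unwind the definition of rank and reduce everything to a statement about reduced divisors that can be attacked with Dhar's burning algorithm. By the Baker--Norine definition, $r_\Gamma(D_B) \geq r$ means precisely that for every effective divisor $E$ of degree $r$ on $\Gamma$, the divisor $D_B - E$ is linearly equivalent to an effective divisor. So I fix such an $E$ and aim to show $D_B - E \sim E'$ with $E' \geq 0$. Passing to a suitable model of $\Gamma$ I may assume $\operatorname{supp}(E) \subseteq V(G)$, and I note that the hypothesis on cycles is purely combinatorial and independent of the edge lengths -- which is exactly what is needed for a conclusion that is supposed to hold for \emph{every} $\Gamma \in M_G^{\trop}$.

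To test effectiveness I would use $q$-reduced divisors: for any base point $q$ there is a unique $N \sim D_B - E$ that is effective away from $q$ and maximally reduced toward $q$, and $D_B - E$ is equivalent to an effective divisor if and only if $N(q) \geq 0$. I compute $N$ by running Dhar's burning algorithm from $q$, iteratively firing the unburnt region until the fire started at $q$ consumes all of $\Gamma$. Suppose, for contradiction, that $N(q) \leq -1$. The burning process endows $\Gamma$ with an acyclic orientation having $q$ as its unique source, in which every point $p \neq q$ satisfies $\operatorname{indeg}(p) \geq N(p)+1$, since a point ignites only once the number of burnt incident directions exceeds the number of chips it carries. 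Summing these inequalities is the mechanism meant to produce the contradiction.

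The key step, and the main obstacle, is that a \emph{global} count of in-degrees only reproves the Riemann--Roch bound $\deg D_B \geq g + r$, which is useless here: the whole point of the proposition (and of the Heawood example, where $\deg D_B = 7 < g + r = 10$) is that it must beat Riemann--Roch. The hypothesis therefore has to be used \emph{locally}, one cycle at a time. Concretely, I would argue that in the fire-orientation each cycle $Z$ of $G$ is not a directed cycle, so it carries a local source; tracking how the fire enters and traverses $Z$, the chips of $N$ lying on $Z$ must account for the incoming fire fronts, and the $r+1$ marked points that $Z$ contains cannot all have been consumed, because $E$ removes only $r$ chips in total. Making this per-cycle bookkeeping precise -- so that the number of chips stranded away from $q$ is at most $\deg D_B - r = |B| - r$ rather than the naive bound $g$ -- is the heart of the argument; equivalently, one shows that an unburnt region must survive unless at least $r$ chips have already been delivered toward $q$, so the fire cannot burn all of $\Gamma$ while $N(q)$ remains negative.

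An alternative line I would keep in reserve is an induction on $r$ applied to the more general assertion that any effective divisor meeting every cycle in total multiplicity at least $r+1$ has rank at least $r$. There the inductive step reduces to a single-chip lemma: given such a divisor and a point $x$, one moves one chip to $x$ while decreasing the multiplicity on each cycle by at most one, leaving a divisor that still meets every cycle at least $r$ times, to which the inductive hypothesis applies. The difficulty is identical in spirit -- controlling the per-cycle chip count through a single chip-firing move, since firing a set can a priori drain a cycle by more than one -- so I expect either route to hinge on the same local analysis of how cycles interact with Dhar's algorithm.
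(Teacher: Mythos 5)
Your proposal has a genuine gap: both routes you describe stop exactly at the step that would constitute the proof. In your main route you correctly observe that summing the in-degree inequalities from Dhar's algorithm only recovers the Riemann--Roch bound, and that the cycle hypothesis must therefore enter locally; but the ``per-cycle bookkeeping'' that you yourself call the heart of the argument is never carried out, so what remains is a plan rather than a proof. Worse, your reserve route rests on a false statement: the generalized assertion that any effective divisor meeting every cycle in total multiplicity at least $r+1$ has rank at least $r$ fails already on the theta graph (two vertices $u,v$ joined by three edges). There $D = 2u$ meets every cycle with multiplicity $2$, yet $r_\Gamma(D) = 0$: running Dhar's algorithm from the midpoint $x$ of one edge, the fire crosses that edge, burns $v$ (which carries no chips), then approaches $u$ from all three directions, and since $3 > 2$ the whole graph burns; thus $2u$ is $x$-reduced with no chip at $x$, so $|2u - x| = \emptyset$. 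Consequently no ``single-chip lemma'' of the kind you propose can exist: it is essential that $D_B$ places chips at $r+1$ \emph{distinct} vertices of every cycle, not merely total multiplicity $r+1$, and any correct argument must exploit that distinction.

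The idea your proposal is missing, and which the paper uses, is Luo's theory of rank-determining sets. The paper argues by induction on $r$: for $r \geq 1$ every cycle contains at least two vertices of $B$, so every connected component $U$ of $\Gamma \smallsetminus B$ has acyclic closure $\overline{U}$; by \cite[Proposition 3.9]{Luo11} each such $\overline{U}$ lies in $\mathcal{L}(B)$, hence $\mathcal{L}(B) = \Gamma$ and $B$ is rank-determining. This means $r_{\Gamma}(D_B) \geq r$ if and only if $r_{\Gamma}(D_B - v) \geq r-1$ for every $v \in B$ --- one never has to subtract an arbitrary point of $\Gamma$, which is precisely the difficulty on which both of your routes founder. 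Since $D_B - v = D_{B \smallsetminus \{v\}}$ and every cycle still contains at least $r$ vertices of $B \smallsetminus \{v\}$, the inductive hypothesis applies immediately and the proof closes; note that the distinctness of the chips is exactly what makes the hypothesis inherit under this step, and that Dhar's algorithm and reduced divisors are sidestepped entirely.
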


\begin{proof}
We prove this by induction on $r$, the case $r=0$ being obvious.  We first show, following \cite[Example 3.10]{Luo11}, that if $r \geq 1$, then $B$ is a rank-determining set.  By assumption, every cycle in $G$ contains at least 2 vertices in $B$.  It follows that, if $U$ is a connected component of $\Gamma \smallsetminus B$, then its closure $\overline{U}$ does not contain a cycle.  By \cite[Proposition 3.9]{Luo11}, we therefore have $\overline{U} \subseteq \mathcal{L} (B)$.  Since the sets $\overline{U}$ cover $\Gamma$, this implies that $\mathcal{L} (B) = \Gamma$, or $B$ is a rank-determining set.  From this we see that $r_{\Gamma} (D_B) \geq r$ if and only if $r_{\Gamma} (D_B - v) \geq r-1$ for every $v \in B$.  By assumption, however, every cycle in $G$ contains at least $r$ vertices in $B \smallsetminus \{ v \}$, so by our inductive hypothesis, $r_{\Gamma} (D_B - v) \geq r-1$.
\end{proof}

Recall that the girth of a graph is the minimum number of vertices in a cycle.

\begin{corollary}
\label{Cor:Bipartite}
Let $G$ be a bipartite graph, and let $B$ denote the set of vertices of one color.  Then, for any metric graph $\Gamma \in M_G^{\trop}$, we have
$$ r_{\Gamma} (D_B) \geq \frac{1}{2} \girth (G) - 1 . $$
\end{corollary}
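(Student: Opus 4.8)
The plan is to reduce this immediately to Proposition \ref{Prop:RankBound} by exploiting the bipartite structure of $G$. The key observation is that, in a bipartite graph, every cycle alternates between the two color classes and therefore has even length. Concretely, if $C$ is any cycle of $G$ with $2k$ vertices, then exactly $k$ of them lie in $B$ (the vertices of one color) and the remaining $k$ lie in the complementary class. This is the entire combinatorial content of the argument.

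Next I would relate $k$ to the girth. Since $C$ has $2k$ vertices and $\girth(G)$ is by definition the minimum number of vertices in any cycle, we have $2k \geq \girth(G)$, so the number of vertices of $C$ lying in $B$ satisfies $k \geq \tfrac{1}{2}\girth(G)$. Because $G$ is bipartite it contains no odd cycles, hence $\girth(G)$ is even and $\tfrac{1}{2}\girth(G)$ is a genuine integer; this lets me set $r = \tfrac{1}{2}\girth(G) - 1$, which is a nonnegative integer of the sort required by the proposition.

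With this choice, $r + 1 = \tfrac{1}{2}\girth(G)$, and the bound above shows that every cycle of $G$ contains at least $r+1$ vertices in $B$. Applying Proposition \ref{Prop:RankBound} then yields $r_{\Gamma}(D_B) \geq r = \tfrac{1}{2}\girth(G) - 1$ for every metric graph $\Gamma \in M_G^{\trop}$, which is exactly the claimed inequality.

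There is no substantial obstacle in this argument; it is a direct specialization of the preceding proposition. The only point that requires a moment's care is verifying that $\tfrac{1}{2}\girth(G)$ is an integer, so that the chosen $r$ is admissible, and this holds precisely because a bipartite graph meets each color class of every cycle in exactly half of that cycle's vertices.
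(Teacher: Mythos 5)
Your proof is correct and matches the paper's intended argument: the paper states the corollary without an explicit proof, precisely because it follows from Proposition~\ref{Prop:RankBound} exactly as you describe, using the fact that every cycle in a bipartite graph alternates colors and hence contains at least $\frac{1}{2}\girth(G)$ vertices of $B$. Your added check that $\frac{1}{2}\girth(G)$ is an integer is a fine (if minor) point of care.
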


\begin{remark}
In general, the divisor $D_B$ may have much larger rank than the bound given in Corollary \ref{Cor:Bipartite}.  For example, given any trivalent graph $G$, we may construct a bipartite graph $G'$ by introducing a vertex in the middle of each edge of $G$.  If $B$ is the set of original vertices of the graph $G$, then $D_B$ is the canonical divisor on $G'$, which is known to have rank $g(G)-1$.  Since there exist graphs of arbitrarily high genus with girth 1, we may construct divisors of the form $D_B$ for which the bound is arbitrarily bad.
\end{remark}

\begin{proof}[Proof of Theorem~\ref{Thm:Heawood}]
The Heawood graph is a bipartite graph of girth 6, so by Corollary \ref{Cor:Bipartite} the divisor $D_B$ has rank at least 2.  To see that the rank is exactly 2, choose any two vertices $v_1 \neq v_2 \in B$ and note that by Dhar's burning algorithm \cite{Dhar90} \cite[Algorithm 2.5]{Luo11}, the divisor $D_B - v_1 - v_2$ is $v_1$-reduced.
\end{proof}

\begin{remark}
Corollary \ref{Cor:Bipartite} does not yield any other example of a trivalent, bipartite graph $G$ such that every metric graph $\Gamma \in M_G^{\trop}$ is Brill-Noether special. Indeed, the corollary produces a divisor of degree $d = g-1$ and rank at least $r = \frac{1}{2} \girth (G) - 1$, for which the Brill-Noether number is
$$ \rho (g,r,d) = g - (r+1)(g-d+r) = g - \frac{1}{4} \girth (G)^2 . $$
We will show that the Heawood graph is the only trivalent graph satisfying the inequality
$$ \frac{1}{4} \girth (G)^2 > g(G) . $$
To see this, note that if $G$ is a trivalent graph of given girth, then we may obtain a lower bound on the number of vertices $n$ by performing a breadth first search starting from any vertex.  This procedure yields the bound
$$ n \geq 2 ( 2^{\frac{1}{2}\girth (G)} - 1) $$
(see, for example, \cite{ExooJajcay08}).  Since $G$ is trivalent, we have
$$ g(G) = \frac{n}{2} + 1 \geq 2^{\frac{1}{2}\girth (G)} , $$
so the inequality above is satisfied if and only if $G$ has genus 8 and girth 6.
\end{remark}

\begin{proof}[Proof of Corollary~\ref{Cor:Lifting}]
Let $\overline{BN}^{2,\an}_7$ be the analytification of the the Brill-Noether locus $\overline{BN}^2_7$ inside of $\overline{M}_8^{\an}$.  By the Brill-Noether Theorem, since $\rho (8,2,7) = -1$, we see that the general point of $\overline{M}_8$ is not contained in $\overline{BN}^2_7$.  Since $\overline{M}_8$ is irreducible of dimension 21, the closed subvariety $\overline{BN}^2_7$ is at most 20-dimensional.  It follows that the image of $\overline{BN}^{2,\an}_7$ under the retraction $\overline{M}_8^{\an} \to \overline{M}_8^{\trop}$ of \cite{acp} has dimension at most 20.  If $G$ is the Heawood graph, however, then $M_G^{\trop}$ is 21-dimensional, so the general point $\Gamma \in M_G^{\trop}$ is not contained in the image of $\overline{BN}^{2,\an}_7$.  It follows that if $\Gamma$ is such a general point, and $C$ is any curve such that the skeleton of $C^{\an}$ is isometric to $\Gamma$, then $C$ is Brill-Noether general, and hence every divisor $D$ on $C$ that specializes to $D_B$ has rank less than 2.
\end{proof}

\bibliography{math}

\end{document}